\newtheorem{lemma}{Лемма}
\newtheorem{theorem}{Теорема}
\newtheorem{definition}{Определение}
\newtheorem{corollary}{Следствие}
\newtheorem{proposition}{Предложение}
\begin{document}
\thispagestyle{empty}

\title[Estimates for generalizated oscillatory integrals ...]
{Estimates for generalizated oscillatory integrals with polynomial phase}

\author{ I.A.Ikromov, A.R.Safarov}
\address{Isroil A.Ikromov, Akbar R.Safarov, 
\newline\hphantom{iii}Uzbekistan Academy of Sciences V.I.Romanovskiy Institute of Mathematics,
\newline\hphantom{iii}Samarkand State University 
\newline\hphantom{iii} 15 University Boulevard 
\newline\hphantom{iii} Samarkand, 140104, Uzbekistan}
\email{safarov-akbar@mail.ru}

\thanks{\sc Ikromov I.A., Safarov A.R. 
Estimates for generalizated oscillatory integrals with polynomial phase}


\maketitle
{
\small
\begin{quote}
\noindent{\bf Abstract. } In this paper we consider the problem on uniform estimates for generalized oscillatory integrals given by   Mittag- Leffler functions with the homogeneous polynomial phase. We obtain a variant of Ricci-Stein Lemma and invariant estimates for corresponding integrals.
\medskip

 \noindent{\bf Keywords:}  Mittag-Leffler functions, phase  function, amplitude.
\end{quote}
}

\section{Introduction}\label{www}
Many problems of harmonic analysis, analytic number theory, and mathematical physics involve
trigonometric (oscillatory) integrals with polynomial phase, a common problem
integration of rational polynomials instead of integrating functions
  \cite{Safarov Summir}, \cite{Safarov},\cite{Safarov11},\cite{Safarov1},\cite{Ricci
and Stein}, \cite{Xitoy},\cite{Domar}. In harmonic analysis, estimates for one dimensional oscillatory integrals can be obtained using van der Corput lemma \cite{VanDer}.  In \cite{MichaelRuzhansky2012} a multidimensional version of the van der
Corput lemma is considered where the decay of the oscillatory integral is established with respect to
all space variables, combining the standard one-dimensional van der Corput lemma
with the stationary phase method.  Estimates for oscillatory integrals with polynomial phase can be found, for instance, in \cite{Safarov1},\cite{AKC}.

The function $E_{\alpha}(z)$ is named after the great Swedish mathematican G\"{o}sta Magnus Mittag-Leffler (1846-1927) who defined it by a power series
\begin{equation}\label{Formul3}
E_{\alpha}(z)=\sum_{k=0}^{\infty}\frac{z^{k}}{\Gamma(\alpha k+1)},\,\,\ \alpha\in\mathbb{C}, Re(\alpha)>0,
\end{equation}
and studied its properties in 1902-1905 in five subsequent notes \cite{ML1}-\cite{ML4} in connection with his summation method for divergent series.

 A classic generalizations of the Mittag-Leffler function, namely the two-parametric Mittag-Leffler function
\begin{equation}\label{Formul4}
E_{\alpha,\beta}(z)=\sum_{k=0}^{\infty}\frac{z^{k}}{\Gamma(\alpha k+\beta)},\,\,\ \alpha,\beta\in\mathbb{C}, Re(\alpha)>0,
\end{equation}
which was deeply investigated independently by Humbert and Agarval in \cite{Hum53},\cite{HumAga53},\cite{Aga53} and by Dzherbashyan in
\cite{Dzh54a},\cite{Dzh54b},\cite{Dzh54c},\cite{Rudolf}.

In the current paper we replace exponential function with the Mittag-Leffler-type function and study the "generalized" oscillatory integrals. In \cite{Ruzhansky} and \cite{Ruzhansky2021}  analogues of the van der Corpute lemmas involving Mittag-Leffler functions for one dimensional integrals have been considered. We consider estimates for multidimensional generalization oscillatory  integrals with polynomial phase. This work is analogous to
\cite{Ricci and Stein} and application for oscillatory integrals with Mittag-Leffler functions.

\section{Preliminaries}

\begin{proposition}(\cite{Pod})\label{Prop1}.
If $0<\alpha<2,\beta$ is an arbitrary real number and $\mu$ is such that $\pi\alpha/2<\mu<\min\{\pi,\pi\alpha\},$ then there is $C>0$ such that
\begin{equation}\label{Formul1}
  |E_{\alpha,\beta}(z)|\leq\frac{C}{1+|z|}, z\in\mathbb{C},\mu\leq|\arg(z)|\leq\pi.
\end{equation}
\end{proposition}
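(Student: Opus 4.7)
The plan is to obtain the estimate by combining two regimes: large $|z|$ (where one exploits the Hankel-type integral representation of the Mittag-Leffler function in the sector $|\arg(z)|>\pi\alpha/2$) and bounded $|z|$ (where one just uses that $E_{\alpha,\beta}$ is entire, hence locally bounded). Splicing these together yields an upper bound of the form $C/(1+|z|)$.

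First I would recall the standard Hankel contour representation, valid for $0<\alpha<2$, in the form
\begin{equation*}
E_{\alpha,\beta}(z)=\frac{1}{2\pi i\alpha}\int_{\gamma(\varepsilon,\theta)}\frac{e^{\zeta^{1/\alpha}}\,\zeta^{(1-\beta)/\alpha}}{\zeta-z}\,d\zeta,
\end{equation*}
where $\gamma(\varepsilon,\theta)$ is a contour that comes in from infinity along $\arg\zeta=-\theta$, goes around the disk $|\zeta|=\varepsilon$, and returns to infinity along $\arg\zeta=\theta$, with $\pi\alpha/2<\theta<\min\{\pi,\pi\alpha\}$. The exponent $\zeta^{1/\alpha}$ has negative real part on the two infinite rays, so the integral converges absolutely.

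Next, for $z$ with $\mu\le|\arg(z)|\le\pi$, I would choose the opening angle $\theta$ so that $\pi\alpha/2<\theta<\mu$. Then $z$ lies outside the contour $\gamma(\varepsilon,\theta)$, and there is a uniform lower bound $|\zeta-z|\ge c(|\zeta|+|z|)$ for $\zeta\in\gamma$, depending only on the angular gap between $\arg\zeta$ and $\arg z$. Plugging this into the integral representation immediately gives
\begin{equation*}
|E_{\alpha,\beta}(z)|\le\frac{1}{2\pi\alpha}\int_{\gamma(\varepsilon,\theta)}\frac{|e^{\zeta^{1/\alpha}}|\,|\zeta|^{(1-\mathrm{Re}\,\beta)/\alpha}}{c(|\zeta|+|z|)}\,|d\zeta|\le\frac{C}{1+|z|},
\end{equation*}
where the remaining integral is finite and $|z|$-independent. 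This is the heart of the argument and, combined with continuity (hence boundedness) of $E_{\alpha,\beta}$ on compact sets, delivers the desired estimate uniformly in the sector $\mu\le|\arg(z)|\le\pi$.

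The main technical point is to verify that the contour can be chosen in the gap $(\pi\alpha/2,\mu)$ — which is exactly why the hypothesis $\pi\alpha/2<\mu$ is imposed — and to check the uniform lower bound on $|\zeta-z|$ on this contour; both are standard once the geometry is drawn. The upper bound on $\pi\alpha$, inherent in requiring $0<\alpha<2$, guarantees $\theta<\pi$ is possible so that the rays of $\gamma$ lie in the left half-plane of $\zeta^{1/\alpha}$, making the integral converge. No further analytic machinery beyond the Hankel representation and elementary contour estimates is needed.
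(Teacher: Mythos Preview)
The paper does not give its own proof of this proposition: it is stated as a cited result from Podlubny's book, with no argument supplied. So there is nothing in the paper to compare your attempt against.

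That said, your sketch is the standard Hankel-contour proof one finds in the cited reference and elsewhere, and it is essentially correct. One small point worth tightening if you were to write it out in full: the inequality $|\zeta-z|\ge c(|\zeta|+|z|)$ for $\zeta$ on $\gamma(\varepsilon,\theta)$ and $z$ in the sector $\mu\le|\arg z|\le\pi$ follows from the angular separation $|\arg\zeta-\arg z|\ge\mu-\theta>0$ via the law of cosines, and you should say so explicitly rather than leave it as ``standard once the geometry is drawn.'' With that filled in, and with the observation that the integral $\int_{\gamma}|e^{\zeta^{1/\alpha}}|\,|\zeta|^{(1-\beta)/\alpha}\,|d\zeta|$ is finite because $\mathrm{Re}\,\zeta^{1/\alpha}<0$ on the rays (this is where $\theta>\pi\alpha/2$ is used), the argument is complete.
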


Let $P(a,x)=\sum\limits_{|\lambda|\leq d}a_{\lambda}x^{\lambda}$  denote a polynomial in $\mathbb{R}^{n}$ of degree at most $d,$ where we write $x^{\lambda}=x_{1}^{\lambda_{1}}\dots x_{n}^{\lambda_{n}},$ $\lambda=(\lambda_{1},\dots,\lambda_{n}),$ with $|\lambda|=\lambda_{1}+\lambda_{2}+\dots\lambda_{n}$.

\begin{definition}
A generalization of oscillatory integral with phase $P(a,x)$ and amplitude $\psi(x)$ is an integral of the form
\begin{equation}\label{int16}
I_{\alpha,\beta}(a)=\int _{Q^{n} }E_{\alpha,\beta}(iP(a,x))\psi(x)dx,
\end{equation}
where $0<\alpha<1,$ $\beta>0$, $\psi\in C^{\infty}(\mathbb{R}^{n})$, $Q^{n}:=[0,1]^n$ is $n$ dimensional cube and $P(a,x)$ polynomial. In particular if $\alpha=1$ and $\beta=1$ we have a classical oscillatory integral.
\end{definition}

\begin{theorem}(\cite{James Wright})\label{th3366}
For each $d,n$ there exist a finite constant $C:=C(n,d)$  such that for any multi-index $\kappa$ and any polynomial $P:\mathbb{R}^{n}\rightarrow\mathbb{R}$ of degree $\leq d$ satisfying $|D^{\kappa}P(a,x)|\geq1$ for every $x\in Q^{n}:=[0,1]^{n},$ and for any $\mu>0,$
$$|\{x\in Q^{n}:|P(a,x)|\leq\mu\}|\leq C\mu^{1/|\kappa|},$$
where $\kappa$ depends on $a.$
\end{theorem}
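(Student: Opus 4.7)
The plan is to induct on $|\kappa|$, peeling off one order of differentiation at each step and handling the residual by a monotonicity argument in a single variable. For the base case $|\kappa|=1$, say $\kappa=e_j$, the hypothesis $|\partial_j P(a,x)|\geq 1$ on $Q^n$ combined with the one-variable mean value inequality yields $|\{x_j:|P(a,x)|\leq\mu\}|\leq 2\mu$ for each fixed value of the remaining variables, and Fubini gives the required bound with exponent $1/|\kappa|=1$.

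For the inductive step, assume the claim for all multi-indices of length strictly less than $|\kappa|$. Without loss of generality $\kappa_1\geq 1$; set $\kappa'=\kappa-e_1$ and $R(x)=\partial_1 P(a,x)$, so that $D^{\kappa'}R=D^\kappa P$ and hence $R$ is a polynomial of degree at most $d-1$ satisfying $|D^{\kappa'}R|\geq 1$ on $Q^n$. For a threshold $\nu>0$ to be optimised, decompose
$$E_\mu:=\{x\in Q^n:|P(a,x)|\leq\mu\}=(E_\mu\cap\{|R|<\nu\})\cup(E_\mu\cap\{|R|\geq\nu\}).$$
The inductive hypothesis applied to $R$ controls the first piece: $|\{|R|<\nu\}|\leq C\,\nu^{1/(|\kappa|-1)}$. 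For the second, freeze $\tilde{x}=(x_2,\ldots,x_n)$ and set $f(x_1)=P(a,x_1,\tilde{x})$; then $\{x_1\in[0,1]:|f'(x_1)|\geq\nu\}$ is the union of at most $O(d)$ intervals, because $f'$ is a polynomial of degree at most $d-1$ in $x_1$, and on each such interval $f$ is strictly monotone with $|f'|\geq\nu$, so the measure of $\{x_1:|f(x_1)|\leq\mu\}$ inside the interval is at most $2\mu/\nu$. Fubini then yields a bound of $2d\,\mu/\nu$ for the second piece. Balancing via $\nu=\mu^{(|\kappa|-1)/|\kappa|}$ equalises the two terms and produces $|E_\mu|\leq C\mu^{1/|\kappa|}$; uniformity in $\kappa$ is automatic since $|\kappa|\leq d$ (otherwise $D^\kappa P\equiv 0$), leaving only finitely many relevant multi-indices.

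The main obstacle, in my view, is the treatment of $E_\mu\cap\{|R|\geq\nu\}$: the whole argument relies crucially on $\partial_1 P$ being a \emph{polynomial} in $x_1$, since only then does the super-level set $\{|f'|\geq\nu\}$ decompose into a bounded number of monotonicity intervals for $f$. This polynomial structure drives both the quantitative bound and the $d$-dependence of the constant, and it would collapse for general smooth phases. One must also verify that the induction constants remain uniform in the frozen variables $\tilde{x}$ at each step, which follows because $R$ itself is a polynomial of degree at most $d$ to which the inductive hypothesis applies with the same constant $C(n,d)$.
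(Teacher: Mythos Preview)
The paper does not supply its own proof of this statement: Theorem~\ref{th3366} is quoted verbatim from Carbery--Christ--Wright and used as a black box in the subsequent arguments. Your induction on $|\kappa|$, splitting at a threshold $\nu$ on the intermediate derivative $R=\partial_1 P$ and exploiting that a one-variable polynomial of bounded degree has $O(d)$ monotonicity intervals, is exactly the standard proof in that reference (and in Arkhipov--Karatsuba--Chubarikov), and it is correct as written. The only cosmetic point is that in the base case you might say explicitly that $|\partial_j P|\geq 1$ together with continuity forces $\partial_j P$ to have constant sign on the connected cube, which is what makes the one-variable slice monotone; but this is implicit in your ``mean value inequality'' remark.
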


\begin{corollary}(\cite{James Wright})\label{cor3366}
Let $P(a,x)=\sum\limits_{0<|\lambda|\leq d}a_{\lambda}x^{\lambda}$ is the polynomial.  Then
$$\left|\int_{Q^{n}}e^{iP(a,x)}\right|dx\leq C_{d,n}\left(\sum\limits_{0<|\lambda|\leq d}|a_{\lambda}|\right)^{-\frac{1}{d}}.$$
Moreover $C_{d,1}\leq Cd$ for an absolute constant $C$.
\end{corollary}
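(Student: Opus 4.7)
The plan is to deduce the bound from Theorem~\ref{th3366} via normalization, a compactness argument on the space of polynomials, and a conversion of a sublevel set estimate into an oscillatory integral bound. First I would reduce to the case $A:=\sum_{0<|\lambda|\le d}|a_\lambda|\ge 1$, since otherwise the trivial estimate $\bigl|\int_{Q^n}e^{iP(a,x)}\,dx\bigr|\le 1\le A^{-1/d}$ already suffices. After writing $P(a,x)=A\widetilde P(x)$ with $\sum|\widetilde a_\lambda|=1$, the claim reduces to showing $\bigl|\int_{Q^n}e^{iA\widetilde P(x)}\,dx\bigr|\le C_{d,n}A^{-1/d}$.

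The next step is to locate a derivative of $\widetilde P$ of order at most $d$ that is large on a substantial subset of $Q^n$. On the finite-dimensional space $\mathcal{P}^0_d$ of polynomials of degree $\le d$ with no constant term, the coefficient $\ell^1$-norm is equivalent to $\widetilde P\mapsto \max_{0<|\kappa|\le d}\|D^{\kappa}\widetilde P\|_{L^\infty(Q^n)}$, since the latter is a norm that vanishes only at $\widetilde P\equiv 0$. Compactness of the $\ell^1$-unit sphere in $\mathcal{P}^0_d$ then yields a universal constant $c_0=c_0(n,d)>0$ such that for every such $\widetilde P$ there exist a multi-index $\kappa$ with $0<|\kappa|\le d$ and a point $x_0\in Q^n$ satisfying $|D^{\kappa}\widetilde P(x_0)|\ge c_0$. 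A Remez-type inequality for polynomials of degree $\le d$ upgrades this to a lower bound $|D^{\kappa}\widetilde P|\ge c_1$ on a sub-cube $Q'\subset Q^n$ of measure $\ge\eta(n,d)>0$. Therefore $|D^{\kappa}P(a,x)|\ge c_1 A\ge 1$ on $Q'$ once $A$ is large, and Theorem~\ref{th3366}, applied on $Q'$ after an affine rescaling to $Q^n$, gives the sublevel bound
\[
|\{x\in Q':|P(a,x)|\le\mu\}|\le C(n,d)\,(\mu/A)^{1/|\kappa|}\le C(n,d)\,(\mu/A)^{1/d}.
\]

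The last step converts this measure-theoretic bound into the oscillatory integral estimate. I would perform it by a one-dimensional van der Corput argument along the coordinate axis corresponding to the largest component of $\kappa$: the slice polynomial $y\mapsto P(a,y,x')$ inherits a lower bound on a derivative of order $\le d$ on an interval of positive length for each $x'$ in a set of positive transverse measure, and the classical one-dimensional van der Corput lemma produces a slice-bound of order $A^{-1/|\kappa|}\le A^{-1/d}$, which integrates out over the transverse variables; the dimension-one constant $C_{d,1}\le Cd$ then matches the Arkhipov--Karatsuba--Chubarikov version of the lemma. The main obstacle I anticipate is making the Remez-type upgrade uniform in $\widetilde P$ on the unit sphere while simultaneously allowing the multi-index $\kappa$ to depend on $\widetilde P$; the slice argument in the final step is likewise needed to avoid the weaker exponent $-1/(d+1)$ that a naive split into a sublevel set and its complement would yield.
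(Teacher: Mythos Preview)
The paper does not supply its own proof of this corollary; it is quoted from Carbery--Christ--Wright \cite{James Wright} as a known result and used as a black box, so there is no in-paper argument to compare against.

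Your outline is broadly along standard lines, but the slicing step contains a genuine gap. Your compactness argument produces a possibly \emph{mixed} multi-index $\kappa$ with $|D^{\kappa}\widetilde P|\ge c_1$ on a sub-cube $Q'$, and you then assert that the one-variable slice $y\mapsto P(y,x')$ along the axis of the largest component $\kappa_j$ ``inherits a lower bound on a derivative of order $\le d$''. This does not follow: writing $\kappa=(\kappa_j,\kappa')$, the hypothesis $|D^{\kappa}P|\ge c_1A$ is a lower bound on the $\kappa_j$-th $y$-derivative of the auxiliary function $\partial_{x'}^{\kappa'}P$, not of $P$ itself, and the one-dimensional van der Corput lemma for $\int e^{iP(y,x')}\,dy$ needs a pure $y$-derivative of $P$ to be large. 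For instance, $P(y,x')=A\,y\,x'$ on $[0,1]^2$ has $|\partial_y\partial_{x'}P|\equiv A$, yet no pure $y$-derivative of the slice $y\mapsto Ax'y$ is bounded below uniformly in $x'$. A clean repair that preserves your architecture is to run the compactness step with the norm $\max_{1\le j\le n}\max_{1\le k\le d}\|\partial_{x_j}^{k}\widetilde P\|_{L^{\infty}(Q^n)}$ instead of $\max_{0<|\kappa|\le d}\|D^{\kappa}\widetilde P\|_{L^{\infty}(Q^n)}$; this is still a norm on polynomials with no constant term (if every first-order pure partial vanishes on $Q^n$ then $\widetilde P$ is constant, hence zero), and it hands you a \emph{pure} derivative $\partial_{x_j}^{k}\widetilde P$ bounded below, after which the slice argument and the one-dimensional bound $C_{d,1}\le Cd$ go through. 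You will also need to say why the bound extends from the sub-cube $Q'$ to all of $Q^n$; the cleanest route is to bypass the Remez step entirely by taking $k$ equal to the $x_j$-degree of $\widetilde P$, so that $\partial_{x_j}^{k}\widetilde P$ is independent of $x_j$ and the lower bound automatically holds on a full slab $[0,1]\times E$ for a set $E$ of $x'$ of positive measure determined by a sublevel estimate in the transverse variables.
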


\begin{lemma}\label{A33}
Let  $P:\mathbb{R}^{n}\rightarrow\mathbb{R}$ be a polynomial of degree $\leq d$  and $|a|=\max\{|a_{\lambda}|,\lambda\leq d\}$. There exists a constant $C_{d,n}$ such that if $a^{0}\in S^{N}$ (where $N+2$ is the dimension of space of polynomials of degree at most $d$) be a fixed point and $|D^{\kappa}P(a^{0},x)|\geq\delta>0$ then the following inequality holds
$$|I_{\alpha,\beta}(\mu a^{0})|\leq \frac{C_{d,n}}{|\mu|^{1/|\kappa|}},$$
where $S^{N}=\{|a|=1\}$ is the unit  sphere  with respect to metric $l^{1}$ and $\mu>0.$
\end{lemma}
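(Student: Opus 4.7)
The plan is to bypass cancellation and instead exploit the algebraic decay of the Mittag-Leffler function given by Proposition \ref{Prop1}. Because $P(a,x)$ is linear in the coefficient vector $a$, we have $P(\mu a^{0},x)=\mu P(a^{0},x)$, so the argument of $E_{\alpha,\beta}$ in the integrand is purely imaginary of the form $i\mu P(a^{0},x)$, whose argument has modulus $\pi/2$. For every $\alpha\in(0,1)$ one can pick a value in the admissible range $(\pi\alpha/2,\min\{\pi,\pi\alpha\})$ that does not exceed $\pi/2$, so Proposition \ref{Prop1} applies and yields
\begin{equation*}
|E_{\alpha,\beta}(i\mu P(a^{0},x))|\le\frac{C}{1+\mu|P(a^{0},x)|}.
\end{equation*}
Consequently $|I_{\alpha,\beta}(\mu a^{0})|\le C\|\psi\|_{\infty}\int_{Q^{n}}\frac{dx}{1+\mu|P(a^{0},x)|}$, and the problem reduces to estimating this real-valued integral.

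For the reduced integral I would invoke Theorem \ref{th3366}. Since $|D^{\kappa}(\delta^{-1}P(a^{0},\cdot))|\ge 1$ on $Q^{n}$, the theorem supplies the level-set bound $|\{x\in Q^{n}:|P(a^{0},x)|\le t\}|\le C(t/\delta)^{1/|\kappa|}$, which, combined with the trivial bound by $1$, fits naturally into a dyadic decomposition. I would split $Q^{n}$ into the low-value region $\{|P(a^{0},x)|\le 1/\mu\}$, whose contribution is at most $C(\mu\delta)^{-1/|\kappa|}$, and the shells $A_{k}=\{2^{k}/\mu<|P(a^{0},x)|\le 2^{k+1}/\mu\}$, $k\ge 0$. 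On $A_{k}$ the integrand is bounded by $2^{-k}$ and $|A_{k}|\le C(2^{k+1}/(\mu\delta))^{1/|\kappa|}$, so the sum is geometric with ratio $2^{-1+1/|\kappa|}$ and produces the target bound $C_{d,n}|\mu|^{-1/|\kappa|}$.

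The main obstacle I foresee is the borderline case $|\kappa|=1$, where the geometric ratio equals $1$ and the series diverges. The remedy is to truncate the dyadic sum at the scale $k^{\ast}$ for which $2^{k^{\ast}}/\mu$ reaches $\max_{x\in Q^{n}}|P(a^{0},x)|$; this maximum is bounded by a constant depending only on $d$ and $n$ because $|a^{0}|=1$ and $x\in Q^{n}$, so $k^{\ast}=O(\log\mu)$. One must then check that the logarithmic factor either is absorbed into $C_{d,n}$ or can be avoided entirely by a direct change of variables in the direction of the large partial derivative, reducing the estimate to a one-dimensional integral of the form $\delta^{-1}\int(1+\mu|u|)^{-1}du$. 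Tracking the dependence of the final constant on $\delta$, $\alpha$, $\beta$, and $\|\psi\|_{\infty}$ is routine, since all of these are fixed by the hypotheses and can be absorbed into $C_{d,n}$.
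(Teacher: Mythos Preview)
Your proposal is correct and follows essentially the same route as the paper: apply Proposition~\ref{Prop1} to reduce to the real integral $\int_{Q^n}(1+\mu|P(a^0,x)|)^{-1}\,dx$, then use a dyadic level-set decomposition together with the sublevel bound of Theorem~\ref{th3366} and sum the resulting geometric series. The paper simply invokes convergence of $\sum 2^{(k+1)/|\kappa|-k}$ under the assumption $|\kappa|\ge 2$, whereas you go further and discuss the borderline $|\kappa|=1$; apart from that (and your more explicit tracking of the $\delta$-dependence), the arguments coincide.
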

\begin{proof}
Using Proposition 1 we obtain

$$|I_{\alpha,\beta}(\mu a^{0})|=\left|\int _{Q^{n} }E_{\alpha,\beta}(iP(a^{0}\mu,x))\psi(x)dx\right|\leq
\int_{Q^{n}}\frac{|\psi(x)|dx}{1+|P(a^{0}\mu,x)|}:=J(\mu a^{0}).$$

Now we represent $Q^{n}$ as union of sets $$Q^{n}=\Omega\cup A_{k},$$
where
$$\Omega=\{x\in Q^{n}:|P(a^{0}\mu,x)|\leq2\}=\left\{x\in Q^{n}:\left|P\left(\frac{a^{0}}{\mu},x\right)\right|\leq\frac{2}
{\mu}\right\}$$ and
$$A_{k}=\{x\in Q^{n}:2^{k}\leq |P(a^{0}\mu,x)|\leq2^{k+1}\}=\left\{x\in Q^{n}:\frac{2^{k}}{\mu}\leq\left|P\left(\frac{a^{0}}{\mu},x\right)\right|\leq\frac{2^{k+1}}
{\mu}\right\},$$ where $k=1,2,....$

 First we represent the integral $J(\mu a^{0})$ as  $$J(\mu a^{0})=J_{0}+J_{k}:=\int_{\Omega}+\int_{A_{k}}.$$ Now we estimate the integral $J_{0}$ on
the set $\Omega.$

So, we obtain
$$|J_{0}|\leq\int_{\Omega}\frac{|\psi(x)|dx}{1+|P(a^{0}\mu,x)|}.$$

Due to Corollary \ref{cor3366} we have
$$|J_{0}|\leq C|\Omega|\leq\frac{C}{\mu^{\frac{1}{|\kappa|}}}.$$

Then we consider the integral $I_{\alpha,\beta}$ on
the sets $A_{k}$. By Theorem \ref{th3366} and we have: $$\left|\left\{\left|P\left(\frac{a^{0}}{\mu},x\right)\right|\leq\frac{2^{k+1}}{\mu},
x\in Q^{n}\right\}\right|
\leq C\left(\frac{2^{k+1}}{\mu}\right)^{\frac{1}{|\kappa|}}.$$

We write $J(\mu a^{0})$ as sum of integrals $J_{k}$ and obtain
$$J(a^0\mu)-J_0(a^0\mu)=\sum\limits_{2^{k}\leq|P(a^{0}\mu,x)|\leq2^{k+1}} J_{k}=\sum\limits_{2^{k}\leq|P(a^{0}\mu,x)|\leq2^{k+1}}\int_{A_{k}}\frac{|\psi(x)|}{1+|P(a^{0}\mu,x)|}dx$$ using  Theorem \ref{th3366} we find the following estimate:
\begin {eqnarray*}|J_{k}|=\left|\int_{A_{k}}\frac{\psi(x)}{1+|P(a^{0}\mu,x)|}dx\right|\leq\left(\frac{2^{k+1}}{\mu}\right)^{\frac{1}{|\kappa|}}
\frac{C}{2^{k}}.
\end {eqnarray*}
From here we find the sum $J_{k}$ and, by estimating the integral $I_{\alpha,\beta}(a^{0}\mu).$
So
$$|I_{\alpha,\beta}(a^{0}\mu)|=|J_{0}|+\sum_{k=1}^{\infty}J_{k}\leq\frac{C}{\mu^{\frac{1}{|d|}}}+\sum_{k=1}^{\infty}\left(\frac{2^{k+1}}{\mu}\right)^{\frac{1}{|\kappa|}}
\frac{C}{2^{k}}\leq\frac{C}{\mu^{\frac{1}{|\kappa|}}}+\frac{C}{\mu^{\frac{1}{|\kappa|}}}\sum_{k=1}^{\infty}2^{\frac{k+1}{|\kappa|}-k}.$$
As the last series is convergence since $|\kappa|\geq2$ then we obtain proof of Lemma \ref{A33}.
\end{proof}

\section{Relation to Mittag-Leffler functions}

\begin{theorem} \label{Th99}
Let $0<\alpha<1,$ $\beta>0$. There exists positive number $C_{d}$ such that for the integral \eqref{int16} with phase  $P(a,x)$ following inequality holds
\begin{equation}\label{Formul199}
|I_{\alpha,\beta}|\leq\frac{C_{d}}{\|a\|^{\frac{1}{d}}},
\end{equation}
where $\|a\|=\sum\limits_{|\lambda|\leq d}|a_{\lambda}|.$
\end{theorem}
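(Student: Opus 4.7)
The plan is to deduce Theorem \ref{Th99} from Lemma \ref{A33} via a normalization followed by a compactness-plus-partition-of-unity argument on the sphere $S^{N}$, in the spirit of Ricci and Stein. Set $\mu:=\|a\|$ and $a^{0}:=a/\mu$, so $a^{0}\in S^{N}$ and $I_{\alpha,\beta}(a)=I_{\alpha,\beta}(\mu a^{0})$. When $\mu\leq 1$ the desired inequality is trivial, since $0<\alpha<1$ forces $\pi\alpha/2<\pi/2=|\arg(iP(a,x))|$, so Proposition \ref{Prop1} yields $|E_{\alpha,\beta}(iP(a,x))|\leq C$, whence $|I_{\alpha,\beta}(a)|\leq C\leq C\mu^{-1/d}$. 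Thus the substantive range is $\mu>1$.

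Next I would produce, locally in $(a^{0},x)$, a derivative of $P$ bounded below. For every $(a^{0},x_{0})\in S^{N}\times Q^{n}$ the polynomial $P(a^{0},\cdot)$ is nontrivial (as $\|a^{0}\|=1$), so its Taylor expansion at $x_{0}$ has some nonvanishing coefficient, i.e.\ there is a multi-index $\kappa=\kappa(a^{0},x_{0})$ with $|\kappa|\leq d$ and $D^{\kappa}P(a^{0},x_{0})\neq 0$. Continuity upgrades this to an open neighborhood $U\times V\subset S^{N}\times Q^{n}$ on which $|D^{\kappa}P(a,x)|\geq\delta>0$. Compactness of $S^{N}\times Q^{n}$ then delivers a finite cover $\{U_{j}\times V_{j}\}_{j=1}^{M}$ with multi-indices $\kappa_{j}$ and uniform lower bounds $\delta_{j}>0$.

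For fixed $a^{0}\in S^{N}$, the indices $j$ with $a^{0}\in U_{j}$ give a cover of $Q^{n}$ by the corresponding $V_{j}$ (there are only finitely many possible subfamilies, so a uniform choice of smooth partition of unity $\{\chi_{j}\}$ with controlled derivatives is available). This splits the integral as $I_{\alpha,\beta}(\mu a^{0})=\sum_{j}\int\chi_{j}(x)\psi(x)E_{\alpha,\beta}(iP(\mu a^{0},x))\,dx$, and on $\mathrm{supp}\,\chi_{j}$ the hypothesis $|D^{\kappa_{j}}P(a^{0},x)|\geq\delta_{j}$ of Lemma \ref{A33} holds (with amplitude $\chi_{j}\psi$), giving a contribution of size $C_{j}\mu^{-1/|\kappa_{j}|}$. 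Since $|\kappa_{j}|\leq d$ and $\mu>1$, each such piece is dominated by $C\mu^{-1/d}$, and summing the finitely many contributions yields \eqref{Formul199}.

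The principal obstacle is that Lemma \ref{A33}, as written, is proved only for $|\kappa|\geq 2$, because the geometric series $\sum 2^{(k+1)/|\kappa|-k}$ in its proof diverges when $|\kappa|=1$. The case $|\kappa_{j}|=1$ must therefore be treated separately: the sublevel set bound of Theorem \ref{th3366} still gives $|\{|P(\mu a^{0},\cdot)|\leq s\}|\leq Cs/(\mu\delta_{j})$, and splitting the dyadic sum at the threshold $2^{k}\sim\mu\delta_{j}$ yields a bound of order $(\log\mu)/\mu$, still dominated by $\mu^{-1/d}$ for $\mu>1$ and $d\geq 1$; the degenerate case $|\kappa_{j}|=0$ gives an even stronger bound of order $\mu^{-1}$. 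Verifying that the finite cover, partition of unity, and constants $\delta_{j},C_{j}$ can be organized uniformly in $a^{0}\in S^{N}$, together with this logarithmic fix, is the main technical point.
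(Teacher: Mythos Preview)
Your argument is correct and, in fact, more carefully organized than the paper's own proof, but it takes a genuinely different route. After the same preliminary bound $|I_{\alpha,\beta}|\le C\int_{Q^n}|\psi|\,(1+|P(a,\cdot)|)^{-1}$ from Proposition~\ref{Prop1}, the paper does \emph{not} normalize and localize on $S^N\times Q^n$; instead it splits $Q^n$ according to the \emph{size} of $|P(a,x)|$ (the set $\Delta_1=\{|P|<2\}$ versus $\Delta_2=\{|P|\ge 2\}$), invokes the sublevel-set estimate of Theorem~\ref{th3366} for $\Delta_1$, and cites Lemma~\ref{A33} (really, the dyadic summation inside its proof) for $\Delta_2$. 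No compactness argument and no partition of unity in $x$ appear at all.

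What your approach buys is rigor at the places where the paper is terse: the paper never says which $\kappa$ is used in Theorem~\ref{th3366} or why $|D^\kappa P|\ge 1$ holds, nor does it verify the hypothesis of Lemma~\ref{A33}; your finite cover $\{U_j\times V_j\}$ manufactures exactly those missing $\kappa_j$ and $\delta_j$, and you also correctly flag and repair the divergence of the geometric series in Lemma~\ref{A33} when $|\kappa|=1$. The price is extra machinery that the paper's intended argument avoids: once one has the uniform bound $|\{x\in Q^n:|P(a,x)|\le s\}|\le C_{n,d}(s/\|a\|)^{1/d}$ (which is the real content behind Theorem~\ref{th3366} and Corollary~\ref{cor3366}), the dyadic sum $\sum_{k\ge 1} 2^{-k}(2^{k}/\|a\|)^{1/d}$ converges directly for $d\ge 2$, and your $(\log\|a\|)/\|a\|$ observation already handles $d=1$. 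One small point worth tightening in your write-up: Lemma~\ref{A33} as stated assumes the derivative lower bound on all of $Q^n$, whereas you only secure it on $\operatorname{supp}\chi_j\subset V_j$; this is harmless because the Carbery--Christ--Wright sublevel estimate localizes to sub-cubes, but it deserves one sentence.
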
 \label{th2}

\textbf{Remark.}
Theorem \ref{Th99} is an analog of the Ricci-Stein lemma \cite{Ricci and Stein}.

\textbf{Proof of Theorem \ref{Th99}.}
We use inequality \eqref{Formul1} for the integral \eqref{int16} and obtain
\begin{align}
|I_{\alpha,\beta}|=& \left|\int _{Q^{n} }E_{\alpha,\beta}(i P(a,x))\psi(x)dx\right|\leq\int _{Q^{n} }\left|E_{\alpha,\beta}(iP(a,x) )\right|\left|\psi(x)\right|dx\nonumber
 \\
\leq & C  \int_{Q^{n}} \frac{|\psi(x)|dx}{1+|P(a,x)|}. \label{int_estimaqqq}
\end{align}

We represent cube as union of following subspaces $$Q^{n}=\Delta_{1}\cup\Delta_{2}:=\{x\in Q^{n}:|P(a,x)|<2\}\cup\{x\in Q^{n}:|P(a,x)|\geq2\}.$$
First we estimate the last integral \eqref{int_estimaqqq} over the $\Delta_{1}$
$$|I_{\alpha,\beta}|\leq\int_{\Delta_{1}}\frac{|\psi(x)|dx}{1+|P(a,x)|}.$$
Using the results sublevel set estimates Theorem \ref{th3366} and we have
$$\left|\int_{\Delta_{1}}\frac{|\psi(x)|dx}{1+|P(a,x)|}\right|\leq\frac{c\max\limits_{x\in Q^{n}}|\psi(x)|}{|a|^{\frac{1}{|\kappa|}}}\leq\frac{c}{|a|^{\frac{1}{|d|}}}.$$
Now we estimate the integral \eqref{int_estimaqqq} over the $\Delta_{2}$
$$|I_{\alpha,\beta}|\leq\int_{\Delta_{2}}\frac{|\psi(x)|dx}{1+|P(a,x)|}.$$
We use Lemma \ref{A33} and we get
$$|I_{\alpha,\beta}|\leq\frac{c}{|a|^{\frac{1}{|d|}}}.$$

\section{The case $d=3$}

Now we consider the following example for $d=3$.

Let $0<\delta<1.$ We consider the integral:
\begin{equation}\label{int188}
J_{\alpha,\beta}=\int_{|x|\leq1} E_{\alpha,\beta}(i(x^{3}+px+q))dx,
\end{equation}
where $0<\alpha<1,$ $\beta>0$.

\begin{theorem}\label{th203}
\noindent \textit{1) If $\frac{1}{3} \leq\delta \leq\frac{1}{2} $ then for the integral \eqref{int188} following estimate holds}
\begin{equation} \label{GrindEQ__3_3_}
\left|J_{\alpha,\beta}\right|\le \frac{c_{\delta } }{\left(\frac{|p|^{3} }{27} +\frac{q^{2} }{4} \right)^{\frac{3\delta -1}{6} } } ,
\end{equation}
\textit{2) If $\frac{1}{2} <\delta <1$ then for the integral \eqref{int188} following estimate holds }
\begin{equation} \label{GrindEQ__3_4_}
\left|J_{\alpha,\beta}\right|\le \frac{c_{\delta } }{|D|^{\delta -\frac{1}{2} } \left(\frac{|p|^{3} }{27} +\frac{q^{2} }{4} \right)^{\frac{2-3\delta }{6} } } ,
\end{equation}
where $D=\frac{p^{3}}{27}+\frac{q^{2}}{4}$ is discriminant of polynomial
$x^{3}+px+q$ and $c_{\delta}$ any positive only depending to $\delta.$
\end{theorem}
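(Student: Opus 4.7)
The plan is to follow the same high-level strategy as in the proof of Theorem~\ref{Th99}: apply Proposition~\ref{Prop1} to replace the Mittag-Leffler integral by a sublevel-set computation, and then exploit the specific structure of the cubic phase. Proposition~\ref{Prop1} gives
$$|J_{\alpha,\beta}|\le C\int_{-1}^{1}\frac{dx}{1+|x^3+px+q|},$$
so everything reduces to estimating the right-hand side in terms of the two relevant invariants of $P(x)=x^3+px+q$: the coefficient norm $\Lambda=|p|^3/27+q^2/4$ and the discriminant $D=p^3/27+q^2/4$, which always satisfy $|D|\le\Lambda$ (with equality when $p\ge 0$).

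My strategy is to prove two endpoint estimates and then obtain the stated family by geometric interpolation. The first (non-degenerate) endpoint is
$$|J_{\alpha,\beta}|\le C\,\Lambda^{-1/12},$$
which I would establish by combining the cube-root sublevel-set bound $|\{x\in[-1,1]:|P(x)|\le\mu\}|\le C\mu^{1/3}$ from Theorem~\ref{th3366} (applied with $\kappa=(3)$, since $P'''\equiv 6$) with the natural cubic rescaling $x=\Lambda^{1/6}y$, which normalises the polynomial to have $\tilde\Lambda=1$ and brings out the factor $\Lambda^{-1/12}$ via a layer-cake integration. The second (degenerate) endpoint is
$$|J_{\alpha,\beta}|\le C\,|D|^{-1/2}\Lambda^{1/6},$$
which I would obtain from the Morse expansion
$$P(x)=P(x_\pm)+3x_\pm(x-x_\pm)^2+(x-x_\pm)^3$$
at the critical points $x_\pm=\pm\sqrt{-p/3}$ (when $p<0$; the case $p\ge 0$ is handled separately using monotonicity of $P$ together with $|D|=\Lambda$), combined with the algebraic identity $P(x_+)P(x_-)=4D$, which forces at least one of $|P(x_\pm)|$ to be controlled by $|D|^{1/2}$ and supplies the $|D|^{-1/2}$ factor.

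The two cases of the theorem then follow by elementary geometric interpolation $|J_{\alpha,\beta}|\le B_1^{1-\theta}B_2^\theta$. For case~1, interpolating the trivial bound $|J_{\alpha,\beta}|\le C$ with the non-degenerate endpoint at $\theta=6\delta-2\in[0,1]$ reproduces the exponent $(3\delta-1)/6$ of \eqref{GrindEQ__3_3_}. For case~2, interpolating between the non-degenerate and degenerate endpoints at $\theta=2\delta-1\in(0,1)$ yields
$$\Lambda^{-(1-\theta)/12}\cdot(|D|^{-1/2}\Lambda^{1/6})^{\theta}=|D|^{-(\delta-1/2)}\Lambda^{(3\delta-2)/6},$$
which is precisely the denominator of \eqref{GrindEQ__3_4_}. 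The breakpoint $\delta=1/2$ is exactly where the non-degenerate bound ceases to be the operative endpoint and the discriminant-sensitive one takes over.

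The main obstacle will be establishing the degenerate endpoint with the sharp $|D|^{-1/2}$ dependence: this requires partitioning $[-1,1]$ into Morse neighbourhoods of each critical point (where a square-root sublevel-set estimate is available, with constant controlled by $|P(x_\pm)|\asymp|D|^{1/2}$) and their complement (where $|P'|$ is bounded from below), and then running a layer-cake estimate on each piece. Once that piece is in place the non-degenerate endpoint and the interpolation are essentially bookkeeping.
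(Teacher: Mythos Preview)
Your interpolation strategy is appealing but has a genuine gap: the ``degenerate endpoint'' bound
\[
|J_{\alpha,\beta}|\le C\,|D|^{-1/2}\Lambda^{1/6}
\]
with an absolute constant $C$ is \emph{false}. Take $P(x)=x^{3}+px$ with $p>0$ large. Then $D=\Lambda=p^{3}/27$, so your proposed bound reads $|J_{\alpha,\beta}|\le C\,(p^{3}/27)^{-1/3}=3C/p$. But by direct computation (using $px\le x^{3}+px\le(1+p)x$ on $[0,1]$ and oddness),
\[
J_{\alpha,\beta}\;\asymp\;\int_{-1}^{1}\frac{dx}{1+|x^{3}+px|}\;\asymp\;\frac{\ln p}{p},
\]
which exceeds $3C/p$ by a factor of $\ln p$. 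This is exactly the regime you flagged as ``handled separately using monotonicity of $P$ together with $|D|=\Lambda$'': monotonicity alone only gives $(\ln p)/p$, not $1/p$. Since the $\delta\to1$ endpoint fails, the geometric interpolation for case~2 cannot go through with a constant independent of $(p,q)$; indeed the theorem's $c_{\delta}$ necessarily blows up as $\delta\to1$, so no uniform $\delta=1$ endpoint exists.

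The paper avoids this obstruction by first passing to the $\delta$-dependent majorant
\[
|J_{\alpha,\beta}|\le\int_{-1}^{1}\frac{dx}{1+|P(x)|}\le\int_{-1}^{1}\frac{dx}{|P(x)|^{\delta}}=:J(p,q),
\]
and then estimating $J(p,q)$ directly for each fixed $\delta$. The point is that $\int|P|^{-\delta}$ scales cleanly under $x\mapsto\lambda x$, which $\int(1+|P|)^{-1}$ does not; this is what allows the paper to normalise (via $x=|p|^{1/2}y$ or $x=|q|^{1/3}z$) to a compact parameter range and then perform a local Morse analysis near the discriminant locus $B^{2}=4/27$. Your Morse expansion and the identity $P(x_{+})P(x_{-})=4D$ are exactly the ingredients used there, but they are applied to $\int|P|^{-\delta}$ for $\delta<1$ rather than to a putative $\delta=1$ endpoint for $J_{\alpha,\beta}$ itself. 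If you rework your argument with $\int|P|^{-\delta}$ as the object to be estimated (for each fixed $\delta$) rather than as an interpolation target, the remaining steps you outline are essentially correct and close to the paper's.
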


\textbf{Remark.} If $\delta\leq\frac{1}{3},$ then according to Theorem
\ref{Th99} the integral \eqref{int188} bounded.

\textbf{Proof of Theorem \ref{th203}.} If $p=q=0,$ then the required estimate is trivially satisfied.
We use inequality \eqref{Formul1} for the integral \eqref{int188} and we obtain
\begin{equation}\label{integ2}
|J_{\alpha,\beta}|\leq\int\limits_{|x|\leq1}\frac{dx}{1+|x^{3}+px+q|}
\leq\int\limits_{|x|\leq1}\frac{dx}{|x^{3}+px+q|^{\delta}}:=J(p,q).
\end{equation}

Let $\frac{1}{3} <\delta <\frac{1}{2} $ and $(p,q)\ne (0,0).$ Now we consider the following cases separately.
\begin{equation} \label{GrindEQ__3_5_}
\max \left\{\frac{|p|^{3} }{27} ,\frac{q^{2} }{4} \right\}=\frac{|p|^{3} }{27}
\end{equation}
and
\begin{equation} \label{GrindEQ__3_6_}
\max \left\{\frac{|p|^{3} }{27} ,\frac{q^{2} }{4} \right\}=\frac{q^{2} }{4} .
\end{equation}

Suppose the condition \eqref{GrindEQ__3_5_} holds, then we use change the variables  $x=\left|p\right|^{\frac{1}{2} } y$ and obtain:
\[\left|J(p,q)\right|=\int _{-\left|p\right|^{-\frac{1}{2} } }^{\left|p\right|^{-\frac{1}{2} } } \frac{\left|p\right|^{\frac{1}{2} } dy}{\left|\left|p\right|^{\frac{3}{2} } y^{3} +\left|p\right|^{\frac{1}{2} } py+q\right|^{\delta } } =\frac{1}{\left|p\right|^{\frac{3\delta -1}{2} } } \int _{-\left|p\right|^{-\frac{1}{2} } }^{\left|p\right|^{-\frac{1}{2} } } \frac{dy}{\left|y^{3} +sgn(p)y+q\left|p\right|^{-\frac{3}{2} } \right|^{\delta } } .\]

We consider the integral
\begin{equation} \label{GrindEQ__3_7_}
\left|J_{1} (p,q)\right|=\int _{-\left|p\right|^{-\frac{1}{2} } }^{\left|p\right|^{-\frac{1}{2} } } \frac{dy}{\left|y^{3} +sgn(p)y+B\right|^{\delta } } ,
\end{equation}
where $\left|B\right|=\left|\frac{q}{\left|p\right|^{\frac{3}{2} } } \right|\le \frac{2}{3\sqrt{3} } .$ We show that this integral is bounded, when $\frac{1}{3}<\delta <\frac{1}{2} .$ If $\left|y\right|\ge 2,$ then $\left|y\right|^{3} \left|1+\frac{1}{y^{2} } +\frac{B}{y^{3} } \right|\ge \left|y\right|^{3} \left|1-\frac{1}{4} -\frac{1}{12\sqrt{3} } \right|\ge \frac{\left|y\right|^{3} }{2} .$
Consequently,
\begin{equation} \label{GrindEQ__3_8_}
\left|J(p,q)\right|=\int _{-2}^{2} \frac{dy}{\left|y^{3} +sgn(p)y+B\right|^{\delta } } +R(B),
\end{equation}
where $R(B)$ is a bounded function of $B\in \left[-\frac{2}{3\sqrt{3} } ,\frac{2}{3\sqrt{3} } \right].$

Let
\begin{equation} \label{GrindEQ__3_9_}
\left|J_{10} \right|:=\int _{-2}^{2} \frac{dy}{\left|y^{3} +sgn(p)y+B\right|^{\delta } } .
\end{equation}
     Suppose $sgn(p)=1.$ Then polynomial $y^{3} +y+B$ has no multiple roots. In fact, for any $B$ it has one simple root. Thus,
\[\left|J_{0} \right|=\int _{-2}^{2} \frac{dy}{\left|y^{3} +y+B\right|^{\delta } } \le c,\]
for $\delta <\frac{1}{2} <1.$

Suppose $sgn(p)=-1.$ Then, since $B$ below in a compact set, it suffices to obtain the corresponding "local" estimate.  Let $\varepsilon $ is sufficiently small positive number.  For a number $B$, consider two sets
\[A_{1} =\left\{\left|B^{2} -\frac{4}{27} \right|\ge \varepsilon \right\},A_{2} =\left\{\left|B^{2} -\frac{4}{27} \right|\le \varepsilon \right\}.\]

First, consider the estimate for the integral when $B$ lies in the set $A_{1} .$ In this case equation  $y^{3} -y+B=0$ has three different roots $y_{1} (B),y_{2} (B),y_{3} (B),$ since  $|y_{j} (B)-y_{k} (B)|\ge \Delta (\varepsilon )>0$ when $j\ne k.$ Since $\delta <1,$ then the following integral convergences
\begin{equation} \label{GrindEQ__3_10_}
J_{10} :=\int _{-2}^{2} \frac{dy}{|(y-y_{1} (B))(y-y_{2} (B))(y-y_{3} (B))|^{\delta } }.
\end{equation}
Since $B$ belongs to the compact set $A_{1} $, the integral \eqref{GrindEQ__3_10_} is uniformly bounded for $\delta <1.$ Therefore, we have
\[\left|J\right|=\frac{1}{\left|p\right|^{\frac{3\delta -1}{2} } } \int _{-\left|p\right|^{\frac{-1}{2} } }^{\left|p\right|^{\frac{1}{2} } } \frac{dy}{\left|y^{3} -y+B\right|^{\delta } } \le \frac{c_{\delta } }{\left|p\right|^{\frac{3\delta -1}{2} } } ,\]
as $B\in A_{1} .$ Now we consider the integral in the case when $B\in A_{2}.$ For the sake of definiteness, we can assume that $\left|B-\frac{2}{3\sqrt{3} } \right|<\varepsilon,$ where $0<\varepsilon <\frac{1}{6\sqrt{3}}$ a fixed positive number. Note that $y_{1,2} =\pm \frac{1}{\sqrt{3} } $ are critical points of the function $F(y)=y^{3}-y+B.$   Thus, the number $B=B_{0} =\frac{2}{3\sqrt{3} } $ for the function $F(y,B_{0} )$ is a simple critical value. Let us study the behavior of the integral as $\left|B-B_{0} \right|<\Delta ,$ where $\Delta$ is sufficiently small fixed positive number. Since $F\left(\frac{1}{\sqrt{3} } \right)=0,$ then $F\left(y,B_{0} \right)=\left(y-\frac{1}{\sqrt{3} } \right)^{2} \left(y+\frac{2}{\sqrt{3} } \right)$. Hence,
\[F\left(y+\frac{1}{\sqrt{3} } ,B\right)=y^{3} +\sqrt{3} y^{2} -\frac{2}{3\sqrt{3} } +B=y^{2} \left(y+\sqrt{3} \right)+H,\]
where $H=B-\frac{2}{3\sqrt{3} } $.

We consider the following integral
\begin{equation} \label{GrindEQ__3_11_}
\int _{-\sigma }^{\sigma }\frac{dy}{\left|y^{2} \left(y+\sqrt{3} \right)+H\right|^{\delta } }  ,
\end{equation}
where $\sigma >0$ sufficiently small fixed positive number. If $\frac{1}{3} <\delta <\frac{1}{2} ,$ then according to the \cite{Ricci and Stein}, the integral \eqref{GrindEQ__3_11_} is uniformly bounded with respect to $H$.  This completes the proof of the first part of Theorem \ref{th203}.

Next, suppose that $\frac{1}{2} <\delta <1.$ We use change of variables as $z=y\sqrt{y+\sqrt{3} } $ and  and denoting the inverse function by $y=y\left(z\right),$ we have:
\[\int _{-\sigma }^{\sigma }\frac{dy}{\left|y^{2} \left(y+\sqrt{3} \right)+H\right|^{\delta } }  =\int _{-\sigma _{1} }^{\sigma _{2} } \frac{y^{'} (z)dz}{\left|z^{2} +H\right|^{\delta } } =\]
\[=\frac{1}{\sqrt{3} } \int _{-\sigma _{1} }^{\sigma _{2} } \frac{dz}{\left|z^{2} +H\right|^{p} } +c\int _{-\sigma _{1} }^{\sigma _{2} } \frac{z\varphi (z)dz}{\left|z^{2} +H\right|^{\delta } } =J_{1}^{'} +J_{2}^{'} \, \, ,\]
where $\varphi$ is any smooth function, $\sigma _{1} $ and $\sigma _{2} $ determined from the conditions $y(\sigma _{1} )=-\sigma ,$ $y(\sigma _{2} )=\sigma .$ In the integrals $J_{1}^{'} $ and $J_{2}^{'} $ we make a linear change $z=\left|H\right|^{1/2} t$. Then
\begin{equation} \label{GrindEQ__3_12_}
J_{1}^{'} =\frac{1}{\sqrt{3} \left|H\right|^{\delta -1/2} } \int _{-\sigma _{1} \left|H\right|^{-1/2} }^{\sigma _{2} \left|H\right|^{-1/2} } \frac{dt}{\left|t^{2} \pm 1\right|^{\delta } } .
\end{equation}
It is easy to show that the integral \eqref{GrindEQ__3_12_} is uniformly bounded with respect to $H$, for $\frac{1}{2} <\delta <1.$ From here we get:
\[|J_{1}^{'} |\le \frac{c}{H^{\delta -1/2} } .\]
The integral $J_{2}^{'}$ is estimated as follows:
\[J_{2}^{'} =c\int _{-\sigma _{1} }^{\sigma _{2} } \frac{z\varphi (z)dz}{\left|z^{2} +H\right|^{\delta } } =c\int _{-\sigma _{1} /H^{1/2} }^{\sigma _{2} /H^{1/2} } \frac{Ht\varphi (\left|H\right|^{1/2} t)dt}{\left|Ht^{2} +H\right|^{\delta } } \le \]
\[\le \frac{c}{H^{\delta -1} } \int _{-\sigma _{1} /H^{1/2} }^{\sigma _{2} /H^{1/2} } \frac{t\varphi (\left|H\right|^{1/2} t)dt}{\left|t^{2} \pm 1\right|^{\delta } } \le \frac{c_{1} }{H^{\delta -1} } \int _{2}^{\left|H\right|^{-1/2} } t^{1-2\delta } dt+\frac{c_{2} }{H^{\delta -1} } =\]
\[=\frac{c_{1} H^{1-\delta } }{2-2\delta } \left. t^{2-2\delta } \right|_{2}^{\left|H\right|^{-1/2} } +\frac{c_{2} }{H^{\delta -1} } =\frac{c_{1} H^{1-\delta } }{2-2\delta } \left[H^{\delta -1} -2^{2-2\delta } \right]+\frac{c_{2} }{H^{\delta -1} } \le C,\]
the validity of the last inequality follows from the condition $\delta <1.$ Thus, the integral $J_{2}^{'} $ is uniformly bounded. Summing up the obtained estimates, we have:
\[\int _{-\sigma }^{\sigma } \frac{dy}{\left|y^{2} \left(y+\sqrt{3} \right)+H\right|^{\delta } } \le \frac{c}{\left|H\right|^{\delta -1/2} } .\]
Thus, under the conditions \eqref{GrindEQ__3_5_} and $\frac{1}{2} <\delta <1$, we get the estimate:
\[\left|J(p,q)\right|=\int _{-1}^{1} \frac{dx}{\left|x^{3} +px+q\right|^{\delta } } \le \frac{1}{|p|^{\frac{3\delta -1}{2} } } \frac{c}{\left|B-\frac{2}{3\sqrt{3} } \right|^{\delta -1/2} } =\frac{1}{|p|^{\frac{3\delta -1}{2} } } \times {\kern 1pt} {\kern 1pt} {\kern 1pt} {\kern 1pt} {\kern 1pt} {\kern 1pt} {\kern 1pt} {\kern 1pt} {\kern 1pt} {\kern 1pt} {\kern 1pt} {\kern 1pt} {\kern 1pt} {\kern 1pt} {\kern 1pt} {\kern 1pt} \]
\[\times \frac{c}{\left|\frac{q}{\left(-p\right)^{3/2} } -\frac{2}{3\sqrt{3} } \right|^{\delta -1/2} } \le \frac{c}{\left|p\right|^{1/4} } \frac{\left|q+\frac{2}{3\sqrt{3} } \left(-p\right)^{3/2} \right|^{\delta -1/2} }{\left|D\right|^{\delta -\frac{1}{2} } } \le \frac{c}{D^{\delta -\frac{1}{2} } \left(\frac{\left|p\right|^{3} }{27} +\frac{q^{2} }{4} \right)^{\frac{2-3\delta }{6} } } .\]

Now suppose that the condition \eqref{GrindEQ__3_6_} holds true. In this case, we use the change of variables $x=\left|q\right|^{1/3} z$ for the integral \eqref{integ2}. Then
\[\left|J(p,q)\right|=\frac{1}{\left|q\right|^{\delta -\frac{1}{3} } } \int _{-\left|q\right|^{\frac{-1}{3} } }^{\left|q\right|^{\frac{1}{3} } } \frac{dy}{\left|z^{3} +Az+sgn(q)\right|^{\delta } } ,\]
where $\left|A\right|=\left|\frac{p}{q^{2/3} } \right|\le \frac{3}{\sqrt[{3}]{4} } $.

If $A\in \left[-\frac{3}{\sqrt[{3}]{4} } +\varepsilon ,\frac{3}{\sqrt[{3}]{4} } -\varepsilon \right]$ (where $\varepsilon $ sufficiently small positive fixed number), then again the integral is uniformly bounded. Case $\left|A^{2} -\frac{9}{\sqrt[{3}]{16} } \right|<\varepsilon $ is considered similarly to the case $\left|B^{2} -\frac{4}{27} \right|<\varepsilon .$

The case when $\delta=\frac{1}{3}$ is trivially holds and we can obtain an analogical estimate for the case when $\delta=\frac{1}{2}$ considering separately the cases $|x^{3}+px+q|>1$ and $|x^{3}+px+q|\leq1$. \textbf{Theorem \ref{th203} is proved.}

\section{Invariant estimates for homogeneous polynomial phase}

In this section, we consider generalized oscillatory integrals with phase function which is a homogeneous polynomial of degree three in two variables
\begin{equation}\label{equation22}
P_{3}(a,x)=a_{0}x_{1}^{3}+3a_{1}x_{1}^{2}x_{2}+3a_{2}x_{1}x_{2}^{2}+a_{3}x_{2}^{3}.
\end{equation}
We consider the following integral
\begin{equation}\label{int161}
I_{\alpha,\beta}=\int _{|x|\leq1}E_{\alpha,\beta}(iP_{3}(a,x))\psi(x)dx_{1}dx_{2},
\end{equation}
where $|x|\leq1$ is the unite circle.
We consider behavior of the integral \eqref{int161} in the case when the coefficients of the polynomial tend to infinity. Let us obtain estimates for integral \eqref{int161} in terms of the invariants of the group of motions of the Euclidean plane. Note that, the discriminant of a polynomial denoted by $D$ is defined by the formula:
$$D=3a_{1}^{2}a_{2}^{2}+6a_{0}a_{1}a_{2}a_{3}-4a_{0}a_{2}^{3}-4a_{1}^{3}a_{3}-a_{0}^{2}a_{3}^{2}$$
it is the invariant of the group $SL(2,\mathbb{C})$.

\begin{theorem}
For the integral \eqref{int16} with phase \eqref{equation22} following inequality
$$|I_{\alpha,\beta}|\leq\frac{c\|\psi\|_{\mathbb{L^{\infty}}}}{|D|^{\frac{1}{6}}},$$
holds, where $c$ is a constant and $D$ is discriminant of the polynomial $P_{3}$.
\end{theorem}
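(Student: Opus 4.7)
The plan is to pass to polar coordinates and reduce the problem to the one-dimensional sublevel estimate provided by Theorem \ref{th203} at the exponent $\delta=2/3$. By Proposition \ref{Prop1},
\[|I_{\alpha,\beta}|\leq C\|\psi\|_{L^{\infty}}\int_{|x|\leq 1}\frac{dx_1\,dx_2}{1+|P_3(a,x)|}.\]
Writing $x_1=r\cos\theta$, $x_2=r\sin\theta$ and $Q(\theta):=P_3(a,\cos\theta,\sin\theta)$ gives $P_3(a,x)=r^3 Q(\theta)$, and splitting the radial integral at $r_0=\min(1,|Q(\theta)|^{-1/3})$ yields the elementary bound $\int_0^1 r\,dr/(1+r^3|Q(\theta)|)\leq C/(1+|Q(\theta)|^{2/3})$. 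Hence it suffices to establish the angular estimate
\[\int_0^{2\pi}\frac{d\theta}{|Q(\theta)|^{2/3}}\leq \frac{C}{|D|^{1/6}}.\]

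Since $D$ is invariant under rotations (as $SO(2)\subset SL(2,\mathbb{C})$) and $P_3$ has at most three zero directions on $S^1$, first rotate coordinates to arrange $a_0\neq 0$ and $a_3\neq 0$ while preserving $D$. Then cover $S^1$ by the arcs $\{|\cos\theta|\geq 1/\sqrt{2}\}$ and $\{|\sin\theta|\geq 1/\sqrt{2}\}$. On the first arc the substitution $t=\tan\theta$, combined with the identities $d\theta=dt/(1+t^2)$ and $\cos^{-2}\theta=1+t^2$, converts the angular integral into $2\int_{-1}^{1}dt/|R(t)|^{2/3}$, where $R(t):=a_0+3a_1 t+3a_2 t^2+a_3 t^3$; the second arc is handled identically via $t=\cot\theta$, producing the polynomial with reversed coefficients. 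A direct calculation from the discriminant formula for a monic cubic yields $\operatorname{disc}(R)=27D$, and the symmetry of the expression for $D$ under $a_0\leftrightarrow a_3$, $a_1\leftrightarrow a_2$ shows the reversed polynomial has the same discriminant.

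Finally depress $R$ via $t=s-a_2/a_3$, writing $R=a_3(s^3+\tilde p s+\tilde q)$; the identity $\operatorname{disc}(R)=-108a_3^4(\tilde p^3/27+\tilde q^2/4)$ combined with $\operatorname{disc}(R)=27D$ forces $\tilde p^3/27+\tilde q^2/4=-D/(4a_3^4)$. The integral becomes $|a_3|^{-2/3}\int_{I}ds/|s^3+\tilde p s+\tilde q|^{2/3}$ over an interval $I$ of length $2$, and applying Theorem \ref{th203} with $\delta=2/3$ makes the exponent $(2-3\delta)/6$ in the second factor vanish, so the bound is $C/|\tilde D|^{1/6}=C|a_3|^{2/3}/|D|^{1/6}$; the prefactor $|a_3|^{-2/3}$ cancels and the desired inequality follows. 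The main technical point is that Theorem \ref{th203} is stated for $[-1,1]$ rather than the shifted $I$, but inspection of its proof shows that the decisive contribution comes from a bounded neighbourhood of the critical points $\pm\sqrt{-\tilde p/3}$ of the depressed cubic, while outside such a neighbourhood the integrand decays like $|s|^{-2}$, so the estimate transfers to any interval of length $2$ with the same constant.
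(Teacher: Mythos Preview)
Your proof follows the same overall architecture as the paper's: bound $E_{\alpha,\beta}$ via Proposition~\ref{Prop1}, pass to polar coordinates, perform the radial integration to pick up the exponent $2/3$, reduce the angular integral via the tangent substitution to a one-variable cubic integral, and invoke Theorem~\ref{th203} at $\delta=2/3$ so that the factor $(|p|^{3}/27+q^{2}/4)^{(2-3\delta)/6}$ disappears and only $|D|^{-1/6}$ survives.

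The one genuine difference is \emph{where} the cubic is depressed. The paper first rotates so that $|a_{0}|=\max_{i}|a_{i}|$ and then applies the shear $x\mapsto x+(a_{1}/a_{0})y$ in two variables \emph{before} going to polar coordinates; this kills the $x^{2}y$ term and produces $P_{3}=a_{0}(x^{3}+pxy^{2}+qy^{3})$ with the crucial feature $|p|,|q|\le 6$. Consequently, after the tangent substitution the paper lands exactly in the setting of Theorem~\ref{th203}: the interval is bounded and fixed, and the parameters $p,q$ range over a compact set. You instead depress \emph{after} the tangent substitution, via $t=s-a_{2}/a_{3}$. This is legitimate, but it has two costs: the interval $I$ of integration is shifted by the possibly large quantity $a_{2}/a_{3}$, and your depressed coefficients $\tilde p,\tilde q$ are not a priori bounded (only $a_{0},a_{3}\neq 0$ has been arranged, not any lower bound on $|a_{3}|$). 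Your final paragraph addresses this, but the justification ``outside a bounded neighbourhood of the critical points the integrand decays like $|s|^{-2}$'' is not quite right as stated: the roots of $s^{3}+\tilde p s+\tilde q$ may themselves be large, so the $|s|^{-2}$ regime may begin far outside $I$. The cleanest way to close this gap is to observe that in fact $\int_{\mathbb{R}}ds/|s^{3}+\tilde p s+\tilde q|^{2/3}\le C/|\tilde D|^{1/6}$ holds uniformly: the left side times $|\tilde D|^{1/6}$ is invariant under the scalings $s\mapsto\lambda s$, $\tilde p\mapsto\lambda^{2}\tilde p$, $\tilde q\mapsto\lambda^{3}\tilde q$, hence reduces to the two normalized cases $s^{3}\pm s$, both of which give finite integrals. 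The paper's ordering simply avoids needing this extension.
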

\textbf{Proof.} For the sake of defined we suppose that $|a_{0} |=max\{ \left|a_{i} \right|,i=\overline{0,3}\} ,$ otherwise, by rotating the coordinate axes, we can reduce the general case to the case under consideration.
Since $D$ is the invariant of the group $SL(2,\mathbb{C}),$ the integral and estimate do not depend on the choice of such a change of variables. Moreover, the norm of the amplitude is uniformly bounded because the rotation group is compact. Let's represent the polynomial in the form
\[P_{3} =a_{0} \left(x^{3} +\frac{3a_{1} }{a_{0} } x^{2} y+\frac{3a_{2} }{a_{0} } xy^{2} +\frac{a_{3} }{a_{0} } y^{3} \right).\]

Let we make a change of variables as follows $x_{1} =x+\frac{a_{1} }{a_{0} } y,$ $y_{1} =y.$ Since $\left|\frac{a_{1 } }{a_{0} } \right|\le 1,$ then the Jacobian and the transformation norm are uniformly bounded, which is important in what follows. As a result, we get
\[P_{3} =a_{0} \left(x_{1}^{3} +px_{1} y_{1}^{2} +qy_{1}^{3} \right)=a_{0} \Phi ,\]
where $p=\frac{3a_{0} a_{2} -3a_{1}^{2} }{a_{0}^{2} } $ и $q=\frac{a_{0}^{2} a_{3} +2a_{1}^{3} -3a_{0} a_{1} a_{2} }{a_{0}^{3} } .$ Note that $|p|\le 6,$ $|q|\le 6.$

Hence the integral has the form
\begin{equation} \label{GrindEQ__4_1_}
J_{\alpha,\beta}:=\int  E_{\alpha,\beta}(ia_{0} \Phi) \psi (x_{1} ,y_{1} )dx_{1} dy_{1} .
\end{equation}
Applying the polar coordinate system $x_{1} =r\cos \theta ,{\kern 1pt} {\kern 1pt} {\kern 1pt} {\kern 1pt} y_{1} =r\sin \theta ,$ we get
\begin{equation} \label{GrindEQ__4_2_}
J_{\alpha,\beta}\leq\int _{0}^{2\pi } \int _{0}^{\infty } E_{\alpha,\beta}(ia_{0} r^{3} \phi (\cos \theta ,\sin \theta )) r\psi (r\cos \theta ,r\sin \theta )drd\theta .
\end{equation}
For the inner integral \eqref{GrindEQ__4_2_}, i.e. for
\[J_{in} :=\int _{0}^{\infty } E_{\alpha,\beta}(ia_{0} r^{3} \phi (\cos \theta ,\sin \theta )) r\psi (r\cos \theta ,r\sin \theta )dr\]
using Proposition \ref{Prop1}, we get
$$|J_{in}|\leq\int_{0}^{\infty}\frac{r|\psi (r\cos \theta ,r\sin \theta )|dr}{1+|a_{0}r^{3}\phi(\cos\theta,\sin\theta)|}.$$
We make change the variable as $\rho=r^{3}|a_{0}\phi(\cos\theta,\sin\theta)|$. So, we have
\[|J_{in} |\le \frac{c\left\| \psi \right\| _{\mathbb{L^{\infty}} } }{|a_{0} \phi (\cos \theta ,\sin \theta )|^{\frac{2}{3} } }\int_{0}^{\infty}\frac{d\rho}{\rho^{\frac{1}{3}}(1+\rho)} .\]
As the last integral convergence, we obtain
\[|J_{in} |\le \frac{c\left\| \psi \right\|_{\mathbb{L^{\infty}} } }{|a_{0} \phi (\cos \theta ,\sin \theta )|^{\frac{2}{3} } }.\]

Thus, the integral $J$ has the estimate
\begin{equation} \label{GrindEQ__4_3_}
|J_{\alpha,\beta}|\le \frac{c\left\| \psi \right\| _{\mathbb{L^{\infty}} } }{|a_{0} |^{\frac{2}{3} } } \int _{0}^{2\pi } \frac{d\theta }{|\phi (\cos \theta ,\sin \theta )|^{\frac{2}{3} } } ,
\end{equation}
where $\phi =\mathop{\cos }\nolimits^{3} \theta +p\cos \theta \mathop{\sin }\nolimits^{2} \theta +q\mathop{\sin }\nolimits^{3} \theta .$

We introduce the following integral
\begin{equation} \label{GrindEQ__4_4_}
J_{2} :=\int _{0}^{2\pi } \frac{d\theta }{|\phi (\cos \theta ,\sin \theta )|^{\frac{2}{3} } } ,
\end{equation}
where $\phi =\mathop{\cos }\nolimits^{3} \theta +p\cos \theta \mathop{\sin }\nolimits^{2} \theta +q\mathop{\sin }\nolimits^{3} \theta .$

\begin{lemma}\label{lemma2}
\textit{For the integral \eqref{GrindEQ__4_4_} following estimate holds true}
\[|J_{2} |=\left|\int _{0}^{2\pi } \frac{d\theta }{|\mathop{\cos }\nolimits^{3} \theta +p\cos \theta \mathop{\sin }\nolimits^{2} \theta +q\mathop{\sin }\nolimits^{3} \theta |^{\frac{2}{3} } } \right|\le \frac{c}{|D(\phi )|^{\frac{1}{6} } } ,\]
\textit{where $D(\phi )=\frac{p^{3} }{27} +\frac{q^{2} }{4} .$}\textbf{\textit{}}
\end{lemma}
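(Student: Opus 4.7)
The plan is to reduce $J_{2}$ to a one-variable integral over $\mathbb{R}$ that is controlled by Theorem \ref{th203} specialised to $\delta=2/3$. The homogeneity of degree three in $(\cos\theta,\sin\theta)$ makes this reduction clean.

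First I would perform the substitution $s=\cot\theta$ on each of $(0,\pi)$ and $(\pi,2\pi)$. Writing $\phi(\cos\theta,\sin\theta)=\sin^{3}\theta\,(s^{3}+ps+q)$ and using $\sin^{2}\theta=1/(1+s^{2})$ together with $d\theta=-ds/(1+s^{2})$, the $(1+s^{2})$ factor cancels exactly against the Jacobian, and I obtain
\[
J_{2} \;=\; 2\int_{-\infty}^{\infty}\frac{ds}{|s^{3}+ps+q|^{2/3}}.
\]

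Next I would split this integral as $\int_{|s|\le R}+\int_{|s|>R}$ with a fixed $R$ (e.g.\ $R=4$). Since $|p|,|q|\le 6$ by the normalisation preceding \eqref{GrindEQ__4_1_}, the inequality $|s^{3}+ps+q|\ge |s|^{3}/2$ holds for $|s|\ge R$, so the tail is dominated by $c\int_{|s|>R}s^{-2}\,ds$, a finite absolute constant. Since $|D|=|p^{3}/27+q^{2}/4|$ is itself uniformly bounded above, such a bound is automatically of the form $c/|D|^{1/6}$. For the bulk $\int_{|s|\le R}$ I would rescale $s=Ru$; this replaces $(p,q)$ by $(\tilde p,\tilde q)=(p/R^{2},q/R^{3})$ and the discriminant by $\tilde D=D/R^{6}$, so that $|\tilde D|^{1/6}=|D|^{1/6}/R$. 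The rescaled integral $\int_{|u|\le 1}du/|u^{3}+\tilde p\,u+\tilde q|^{2/3}$ is precisely the quantity $J(\tilde p,\tilde q)$ from \eqref{integ2}, and the proof of Theorem \ref{th203} bounds it by the right-hand side of \eqref{GrindEQ__3_4_}. The key point is that at $\delta=2/3$ the exponent $(2-3\delta)/6$ vanishes, so that bound collapses to $c/|\tilde D|^{1/6}$; undoing the rescaling returns the required $c/|D|^{1/6}$.

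The main obstacle is conceptual rather than computational: it is noticing that $\delta=2/3$ is the unique value at which the second factor in the bound from the second part of Theorem \ref{th203} degenerates to $1$, leaving only the discriminant factor $|D|^{1/6}$. Once that is observed, the substitution $s=\cot\theta$ does all the work and no further analysis of the roots of $u^{3}+pu+q$ is needed, since that analysis is already encoded in Theorem \ref{th203}.
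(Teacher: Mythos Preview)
Your argument is correct and follows the same strategy as the paper: reduce $J_{2}$ to a one-variable integral of the form $\int dx/|x^{3}+px+q|^{2/3}$ and then invoke the estimate established in the proof of Theorem~\ref{th203} at $\delta=2/3$, where the exponent $(2-3\delta)/6$ vanishes and only the discriminant factor $|D|^{-1/6}$ remains. Your observation that the bounded tail and the bounded ``constant'' pieces are automatically $\le c|D|^{-1/6}$ because $|p|,|q|\le 6$ force $|D|$ to be bounded above is exactly the mechanism the paper uses as well.

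The only difference is in the bookkeeping of the reduction. The paper splits $[0,2\pi]$ into quarter-intervals, uses $t=\tan\theta$ on $(0,\pi/4)$ (which produces the polynomial $qt^{3}+pt^{2}+1$), then bounds the piece $t\in[0,1/4]$ trivially and applies a further substitution $x=1/t$ on $[1/4,1]$ to recover the form $x^{3}+px+q$; on $(\pi/4,\pi/2)$ it uses $\cot\theta$ directly. Your single global substitution $s=\cot\theta$ on each half-period accomplishes all of this in one stroke, because factoring out $\sin^{3}\theta$ yields $s^{3}+ps+q$ immediately and the Jacobian cancels the $\sin^{2}\theta$. This is cleaner and avoids the intermediate polynomial $qt^{3}+pt^{2}+1$ and the second change of variables, but the underlying idea and the appeal to Theorem~\ref{th203} are identical.
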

Lemma \ref{lemma2} was proved in \cite{Saf Bashkir}. For the convince of readers we give a proof of Lemma \ref{lemma2}.

\textbf{Proof of Lemma \ref{lemma2}.} Note, that
\[J_{2} =2\int _{0}^{\pi } \frac{d\theta }{|\mathop{\cos }\nolimits^{3} \theta +p\cos \theta \mathop{\sin }\nolimits^{2} \theta +q\mathop{\sin }\nolimits^{3} \theta |^{\frac{2}{3} } } =2(J_{2_{0} } +J_{2_{1} } ),\]
where $J_{2_{0} } =\int _{0}^{\frac{\pi }{2} } \frac{d\theta }{|\mathop{\cos }\nolimits^{3} \theta +p\cos \theta \mathop{\sin }\nolimits^{2} \theta +q\mathop{\sin }\nolimits^{3} \theta |^{\frac{2}{3} } }, $  $J_{2_{1} } =\int _{\frac{\pi }{2} }^{\pi } \frac{d\theta }{|\mathop{\cos }\nolimits^{3} \theta +p\cos \theta \mathop{\sin }\nolimits^{2} \theta +q\mathop{\sin }\nolimits^{3} \theta |^{\frac{2}{3} } }.$

First, we consider
\[J_{2_{0} } =\int _{0}^{\frac{\pi }{2} } \frac{d\theta }{|\mathop{\cos }\nolimits^{3} \theta +p\cos \theta \mathop{\sin }\nolimits^{2} \theta +q\mathop{\sin }\nolimits^{3} \theta |^{\frac{2}{3} } } =\]
\[=\int _{0}^{\frac{\pi }{4} } \frac{d\theta }{|\mathop{\cos }\nolimits^{3} \theta +p\cos \theta \mathop{\sin }\nolimits^{2} \theta +q\mathop{\sin }\nolimits^{3} \theta |^{\frac{2}{3} } } +\int _{\frac{\pi }{4} }^{\frac{\pi }{2} } \frac{d\theta }{|\mathop{\cos }\nolimits^{3} \theta +p\cos \theta \mathop{\sin }\nolimits^{2} \theta +q\mathop{\sin }\nolimits^{3} \theta |^{\frac{2}{3} } } .\]
Then
\[J_{2_{0} }^{'} =\int _{0}^{\frac{\pi }{4} } \frac{d\theta }{|\mathop{\cos }\nolimits^{3} \theta +p\cos \theta \mathop{\sin }\nolimits^{2} \theta +q\mathop{\sin }\nolimits^{3} \theta |^{\frac{2}{3} } } \]
making the change of variables $tg\theta =t,$ we get
\begin{equation} \label{GrindEQ__4_5_}
J_{2_{0} }^{'} =\int _{0}^{1} \frac{dt}{|qt^{3} +pt^{2} +1|^{\frac{2}{3} } } .
\end{equation}

Let $|t|\le \frac{1}{4} $ as $|p|\le 6,{\kern 1pt} {\kern 1pt} {\kern 1pt} {\kern 1pt} \; |q|\le 6,$ then $|qt^{3} |\le \frac{6}{64} ,$ $|pt^{2} |\le \frac{6}{16} .$ Hence,

\noindent $\left|t^{3} +pt^{2} \right|\le \frac{30}{64} $ и $\left|1+qt^{3} +pt^{2} \right|\ge 1-\frac{30}{64} =\frac{34}{64} .$

Then
\[\left|J_{2_{0} }^{'} \right|=\int _{0}^{1} \frac{dt}{|qt^{3} +pt^{2} +1|^{\frac{2}{3} } } =\int _{0}^{\frac{1}{4} } \frac{dt}{|qt^{3} +pt^{2} +1|^{\frac{2}{3} } } +\]
\[+\int _{\frac{1}{4} }^{1} \frac{dt}{|qt^{3} +pt^{2} +1|^{\frac{2}{3} } } \le 1+\int _{\frac{1}{4} }^{1} \frac{dt}{|qt^{3} +pt^{2} +1|^{\frac{2}{3} } } .\]
By changing the variables $x=\frac{1}{t} $ the last integral is reduced to the form:
\begin{equation} \label{GrindEQ__4_6_}
\int _{\frac{1}{4} }^{1} \frac{dt}{|qt^{3} +pt^{2} +1|^{\frac{2}{3} } } =\int _{1}^{4} \frac{dx}{|x^{3} +px+q|^{\frac{2}{3} } } .
\end{equation}
It is obvious that the equality
\[\int _{\frac{\pi }{4} }^{\frac{\pi }{2} } \frac{d\theta }{|\mathop{\cos }\nolimits^{3} \theta +p\cos \theta \mathop{\sin }\nolimits^{2} \theta +q\mathop{\sin }\nolimits^{3} \theta |^{\frac{2}{3} } } =\int _{0}^{1} \frac{dx}{|x^{3} +px+q|^{\frac{2}{3} } } .\]
Thus, the problem reduces to estimating an integral of the form
\begin{equation} \label{GrindEQ__4_7_}
\int _{N_{1} }^{N_{2} } \frac{dx}{|x^{3} +px+q|^{\frac{2}{3} } } ,
\end{equation}
where $N_{1} ,{\kern 1pt} {\kern 1pt} N_{2} $ are fixed numbers. Finally, the desired estimate for the integral \eqref{GrindEQ__4_7_} follows easily from Theorem \ref{th203}. The estimate for the integral $J_{2_{1}}^{'} $ is similarly performed. Which completes the proof of Lemma \ref{lemma2}.

Applying Lemma \ref{lemma2} for the integral \eqref{GrindEQ__4_3_}, we obtain the required estimate. \textbf{Theorem 5.1 is proved.}

\textbf{Declaration of competing interest.}

This work does not have any conflicts of interest.

\subsection*{Data availability} My manuscript has no associated data.


\end{document}